\newcolumntype{C}{>{$}c<{$}}
\newtheorem{theorem}{Theorem}
\newtheorem{lemma}{Lemma}
\newtheorem{exmp}{Example}
\newtheorem{cor}{Corollary}
\begin{document}

\title{On two types of $Z$-monodromy in triangulations of surfaces}
\author{Mark Pankov, Adam Tyc}
\subjclass[2000]{}
\keywords{embedded graph, triangulation of surfaces, zigzag, $z$-monodromy}

\address{Mark Pankov: Faculty of Mathematics and Computer Science, 
University of Warmia and Mazury, S{\l}oneczna 54, 10-710 Olsztyn, Poland}
\email{pankov@matman.uwm.edu.pl}

\address{Adam Tyc: Institute of Mathematics, Polish Academy of Science, \'Sniadeckich 8, 00-656 Warszawa, Poland}
\email{atyc@impan.pl}

\maketitle

\begin{abstract}
Let $\Gamma$ be a triangulation of a connected closed $2$-dimensional (not necessarily orientable) surface.
Using zigzags (closed left-right paths), 
for every face of $\Gamma$ we define the $z$-monodromy which acts on the oriented edges of this face.
There are precisely $7$ types of $z$-monodromies.
We consider  the following two cases: (M1) the $z$-monodromy is identity,
(M2) the $z$-monodromy is the consecutive passing of the oriented edges.
Our main result is the following: 
the subgraphs of the dual graph $\Gamma^{*}$ formed by edges whose $z$-monodromies are of types (M1) and (M2), respectively,
both are forests.
We apply this statement to the connected sum of $z$-knotted triangulations. 
\end{abstract}

\section{Introduction}
A {\it zigzag}  of a graph embedded in a surface is a closed path,
where any two consecutive edges, but not three, belong to a face \cite{DDS-book,Lins1}.
Zigzags are known also as {\it Petrie paths} \cite{Coxeter} or {\it closed left-right paths} \cite{GR-book,Shank}.
Similar objects in simplicial complexes and abstract polytopes are investigated in \cite{DP,W}.
An embedded graph is called $z$-{\it knotted} if it contains a single zigzag. 
Such graphs are closely connected to the {\it Gauss code problem} and have interesting homological properties
(see \cite[Section 17.7]{GR-book} for the planar case and \cite{CrRos,Lins2} for the case when a graph is embedded in an arbitrary surface).
It was established in \cite{PT2} that 
every triangulation of a connected closed 2-dimensional (not necessarily orientable) surface admits a z-knotted shredding.
The concept of $z$-{\it monodromy} is a crucial tool used in the construction of  such shreddings.

For every face $F$ of an embedded graph we define the $z$-monodromy $M_{F}$ which acts on the oriented edges of this face.
If $e$ is such an edge, then there is a unique zigzag coming out from $F$ through $e$ and 
we define $M_{F}(e)$ as the first oriented edge of $F$ which occurs in this zigzag after $e$.

By \cite{PT2}, there are precisely seven types of $z$-monodromies for faces in triangulations (we describe them in Section 3).
A triangulation is $z$-knotted if and only if the $z$-monodromy of every face has one of the four $z$-knotted types.
In this paper, we consider the following two types of $z$-knotted monodromies: (M1) the $z$-monodromy is identity,
(M2) the $z$-monodromy is the consecutive passing of the oriented edges.
We show that for any triangulation the subgraphs of the dual graph formed by faces whose $z$-monodromies are of type (M1) and (M2), respectively,
both are forests. 
Examples of these subgraphs will be considered in Section 7.

In particular, this result implies that every triangulation contains a face whose $z$-monodromy is not identity. 
Combining this fact with \cite[Theorem 4]{PT2}, we get the following:
for any two $z$-knotted triangulations 
there are a pair of faces and a bijection between the vertex sets of these faces 
such that the corresponding connected sum of triangulations is $z$-knotted.

\section{Zigzags in triangulations}
Let $\Gamma$ be a {\it triangulation} of  a connected closed $2$-dimensional (not necessarily orientable) surface $M$,
i.e. a closed $2$-cell embedding of a finite connected graph in $M$ such that the following conditions hold: 
1) every face contains precisely three edges,
2) every edge is contained in precisely  two distinct faces,
3) the intersection of two distinct faces is an edge or a vertex or empty.
See \cite{MT-book} for more information concerning triangulations of surfaces.

A {\it zigzag}  in $\Gamma$ is a sequence of edges $\{e_{i}\}_{i\in {\mathbb N}}$ satisfying the following conditions for every $i\in {\mathbb N}$:\begin{enumerate}
\item[$\bullet$] the edges $e_{i}$ and $e_{i+1}$ are adjacent, i.e. they have a common vertex and  there is a face containing them;
\item[$\bullet$] the faces containing $e_{i},e_{i+1}$ and $e_{i+1},e_{i+2}$ are distinct
and the edges $e_{i}$ and $e_{i+2}$ are non-intersecting.
\end{enumerate} 
Since the number of edges is finite, this sequence is cyclic, i.e.
there is a natural number $n>0$ such that $e_{i+n}=e_{i}$ for every $i\in {\mathbb N}$. 
In what follows, every zigzag will be written as a cyclic sequence $e_{1},\dots,e_{n}$,
where $n$ is the smallest number satisfying the above condition.

Every zigzag is completely determined by any pair of consecutive edges
and for any pair of adjacent edges $e$ and $e'$ there is a unique zigzag containing the sequence $e,e'$.
If $Z=\{e_{1},\dots,e_{n}\}$ is a zigzag, then the same holds for the reversed sequence $Z^{-1}=\{e_{n},\dots,e_{1}\}$.
It is not difficult to prove that a zigzag cannot contain a sequence of type $e,e',\dots,e',e$.
For this reason, $Z\ne Z^{-1}$ for any zigzag $Z$, in other words, 
a zigzag cannot be self-reversed.

\begin{exmp}\rm
See Fig.1 for zigzags (drawn by the bold line) in the three Platonic solids which are triangulations of ${\mathbb S}^2$.
\begin{center}
\begin{tikzpicture} [scale=0.5]
\draw[xshift=4.375cm, fill=black] (0:1.75cm) circle (3pt);
\draw[xshift=4.375cm, fill=black] (90:1.75cm) circle (3pt);
\draw[xshift=4.375cm, fill=black] (180:1.75cm) circle (3pt);
\draw[xshift=4.375cm, fill=black] (270:1.75cm) circle (3pt);
    \draw[xshift=4.375cm,thick,line width=2pt] (0:1.75cm) \foreach \x in {90,180,...,359} {
            -- (\x:1.75cm) 
        } -- cycle (90:1.75cm);
    \draw[xshift=4.375cm, dashed] (0:1.75cm) \foreach \x in {90,270} {-- (\x:1.75cm)};
    \draw[xshift=4.375cm] (0:1.75cm) \foreach \x in {0,180} {-- (\x:1.75cm)};

\draw[xshift=8.75cm, fill=black] (0:1.75cm) circle (3pt);
\draw[xshift=8.75cm, fill=black] (60:1.75cm) circle (3pt);
\draw[xshift=8.75cm, fill=black] (120:1.75cm) circle (3pt);
\draw[xshift=8.75cm, fill=black] (180:1.75cm) circle (3pt);
\draw[xshift=8.75cm, fill=black] (240:1.75cm) circle (3pt);
\draw[xshift=8.75cm, fill=black] (300:1.75cm) circle (3pt);
    \draw[xshift=8.75cm,thick,line width=2pt] (0:1.75cm) \foreach \x in {60, 120,...,359} {
            -- (\x:1.75cm) 
        } -- cycle (60:1.75cm);
    \draw[xshift=8.75cm, dashed] (0:1.75cm)--(120:1.75cm)--(240:1.75cm)--cycle;
    \draw[xshift=8.75cm] (60:1.75cm)--(180:1.75cm)--(300:1.75cm)--cycle;

\draw[xshift=13.125cm, fill=black] (0:1.75cm) circle (3pt);
\draw[xshift=13.125cm, fill=black] (36:1.75cm) circle (3pt);
\draw[xshift=13.125cm, fill=black] (72:1.75cm) circle (3pt);
\draw[xshift=13.125cm, fill=black] (108:1.75cm) circle (3pt);
\draw[xshift=13.125cm, fill=black] (144:1.75cm) circle (3pt);
\draw[xshift=13.125cm, fill=black] (180:1.75cm) circle (3pt);
\draw[xshift=13.125cm, fill=black] (216:1.75cm) circle (3pt);
\draw[xshift=13.125cm, fill=black] (252:1.75cm) circle (3pt);
\draw[xshift=13.125cm, fill=black] (288:1.75cm) circle (3pt);
\draw[xshift=13.125cm, fill=black] (324:1.75cm) circle (3pt);
    \draw[xshift=13.125cm,thick,line width=2pt] (0:1.75cm) \foreach \x in {36, 72,...,359} {
            -- (\x:1.75cm) 
        } -- cycle (36:1.75cm);
\draw[xshift=13.125cm, fill=black] (0,0) circle (3pt);
    \draw[xshift=13.125cm] (0:1.75cm)--(0,0) (72:1.75cm)--(0,0) (144:1.75cm)--(0,0) (216:1.75cm)--(0,0) (288:1.75cm)--(0,0);
    \draw[xshift=13.125cm] (0:1.75cm)--(72:1.75cm)--(144:1.75cm)--(216:1.75cm)--(288:1.75cm)--cycle;
    \draw[xshift=13.125cm, dashed] (36:1.75cm)--(0,0) (108:1.75cm)--(0,0) (180:1.75cm)--(0,0) (252:1.75cm)--(0,0) (324:1.75cm)--(0,0);
    \draw[xshift=13.125cm, dashed] (36:1.75cm)--(108:1.75cm)--(180:1.75cm)--(252:1.75cm)--(324:1.75cm)--cycle;
\end{tikzpicture}
\captionof{figure}{ }
\end{center}
The sequence
$$a1, 12, 2b, b3, 31, 1a, a2, 23, 3b, b1, 12, 2a, a3, 31, 1b, b2, 23, 3a$$
is a zigzag in the $3$-gonal bipyramid (Fig.2). This zigzag is unique (up to reverse).
\end{exmp}

Let $F$ be a face in $\Gamma$.
Denote by ${\mathcal Z}(F)$ the set of all zigzags containing edges of $F$.
If a zigzag $Z$ belongs to ${\mathcal Z}(F)$, then the same holds for the reversed zigzag $Z^{-1}$.
Every zigzag from ${\mathcal Z}(F)$ contains at least two edges of $F$. 
We say that the triangulation $\Gamma$ is {\it locally $z$-knotted} in the face $F$ if ${\mathcal Z}(F)$ contains precisely two zigzags,
i.e. ${\mathcal Z}(F)=\{Z,Z^{-1}\}$.

A triangulation is called $z$-{\it knotted} if it contains precisely two zigzags,
in other words, there is a single zigzag (up to reverse).
A triangulation is $z$-knotted if and only if it is locally $z$-knotted in each face
\cite[Theorems 2 and 3]{PT2}. 
\begin{center}
\begin{tikzpicture}[line join = round, line cap = round, scale=0.4]
\pgfmathsetmacro{\factor}{1/sqrt(2)};
\coordinate (A) at (2,0,-2*\factor);
\coordinate (B) at (-2,0,-2*\factor);
\coordinate (C) at (0.75,0.5,2*\factor);
\coordinate (D) at (1.25,-2.5,2*\factor);
\coordinate (E) at (1.25,4.5,2*\factor);

\draw[fill=black] (A) circle (3pt);
\draw[fill=black] (B) circle (3pt);
\draw[fill=black] (C) circle (3pt);
\draw[fill=black] (D) circle (3pt);
\draw[fill=black] (E) circle (3pt);

\draw (A)--(D)--(B)--cycle;
\draw (A) --(D)--(C)--cycle;
\draw (B)--(D)--(C)--cycle;

\draw (A)--(E)--(B)--cycle;
\draw (A) --(E)--(C)--cycle;
\draw (B)--(E)--(C)--cycle;

\node at (2.5,0,-2*\factor) {$3$};
\node at (-2.5,0,-2*\factor) {$1$};
\node at (0.35,0.25,2*\factor) {$2$};

\node at (1.25,-3,2*\factor) {$b$};
\node at (1.25,5,2*\factor) {$a$};
\end{tikzpicture}
\captionof{figure}{ }
\end{center}

\section{$Z$-monodromy}
Let $F$ be a face in $\Gamma$ whose vertices are $a,b,c$.
Consider the set $\Omega(F)$ consisting of  all {\it oriented} edges of $F$, i.e.
$$\Omega(F)=\{ab,bc,ca,ac,cb,ba\},$$
where $xy$ is  the edge from $x\in\{a,b,c\}$ to $y\in\{a,b,c\}$.
If $e=xy$, then we write $-e$ for the edge $yx$.
Denote by $D_{F}$ the permutation
$$(ab,bc,ca)(ac,cb,ba)$$
on the set $\Omega(F)$.
If $x,y,z$ are three mutually distinct vertices of $F$, then $D_{F}(xy)=yz$.
The $z$-{\it monodromy} of the face $F$ is the permutation $M_{F}$ defined on $\Omega(F)$ as follows.
For any $e\in \Omega(F)$ we take $e_{0}\in \Omega(F)$ such that $D_{F}(e_{0})=e$
and consider the zigzag containing the sequence $e_{0},e$. 
We define $M_{F}(e)$ as the first element of  $\Omega(F)$ contained in this zigzag after $e$.

\begin{exmp}{\rm
It is easy to see that for every face $F$ in each of the three Platonic triangulations (Example 1) the $z$-monodromy is $D^{-1}_{F}$.
More generally, the following two conditions are equivalent:
\begin{enumerate}
\item[$\bullet$] every zigzag in a triangulation is edge-simple, i.e. all edges in this zigzag are mutually distinct;
\item[$\bullet$] $M_{F}=D^{-1}_{F}$ for every face $F$.
\end{enumerate}
See \cite[Example 1]{PT2}.
}\end{exmp}

By \cite[Theorem 2]{PT2},  we have the following possibilities for the $z$-monodromy $M_{F}$:
\begin{enumerate}
\item[(M1)] $M_{F}$ is identity,
\item[(M2)] $M_{F}=D_{F}$,
\item[(M3)] $M_{F}=(-e_{1},e_{2},e_{3})(-e_{3},-e_{2},e_{1})$, where $(e_{1},e_{2},e_{3})$ is one of the cycles in the permutation  $D_{F}$,
\item[(M4)] $M_{F}=(e_{1},-e_{2})(e_{2},-e_{1})$, where $(e_{1},e_{2},e_{3})$ is one of the cycles in $D_{F}$
{\rm(}$e_{3}$ and $-e_{3}$ are fixed points{\rm)},
\item[(M5)] $M_{F}=(D_{F})^{-1}$,
\item[(M6)] $M_{F}=(-e_{1},e_{2},e_{3})(-e_{3},-e_{2},e_{1})$, where $(e_{1},e_{2},e_{3})$ is one of the cycles in the permutation $(D_{F})^{-1}$,
\item[(M7)] $M_{F}=(e_{1},e_{2})(-e_{1},-e_{2})$, where $(e_{1},e_{2},e_{3})$ is one of the cycles in $D_{F}$
{\rm(}$e_{3}$ and $-e_{3}$ are fixed points{\rm)}.
\end{enumerate}
The triangulation $\Gamma$ is locally $z$-knotted in the face $F$ only in the cases (M1)--(M4).
A triangulation is $z$-knotted if and only if the $z$-monodromies of all faces are types (M1)--(M4) \cite[Theorem 3]{PT2}.

Each of the possibilities (M1)--(M7) is realized \cite[Section 5]{PT2}. 
In particular, the following assertions are fulfilled: 
\begin{enumerate}
\item[$\bullet$] The $z$-monodromy of every face in the $(2k+1)$-gonal bipyramid is of type (M3) if $k$ is odd;
in the case when $k$ is even, the $z$-monodromy of every face is of type (M4). 
\item[$\bullet$] The $z$-monodromy of every face in the $2k$-gonal bipyramid is of type (M7) if $k$ is odd;
if $k$ is even, then the $z$-monodromies of all faces are of type (M5).
\end{enumerate}
Examples of triangulations containing faces with the $z$-monodromies of types (M1) and (M2) will be considered in Section 7.

Suppose that $\Gamma$ is locally $z$-knotted in a face $F$.
Then for the $z$-monodromy $M_{F}$ one of the possibilities (M1)--(M4) is realized.
 If $M_{F}$ is of type (M1) or (M2), then every zigzag from ${\mathcal Z}(F)$
 passes through each edge of $F$ twice in the same direction; more precisely, 
 it goes twice through three elements of $\Omega(F)$ which form a cycle in $D_{F}$ (Fig.3a).
\begin{center}
\begin{tikzpicture}[scale=0.6]
\draw[fill=black] (0,2) circle (3pt);
\draw[fill=black] (-1.7320508076,-1) circle (3pt);
\draw[fill=black] (1.7320508076,-1) circle (3pt);

\draw [thick, decoration={markings,
mark=at position 0.65 with {\arrow[scale=2,>=stealth]{>>}}},
postaction={decorate}] (0,2) -- (-1.7320508076,-1);

\draw [thick, decoration={markings,
mark=at position 0.65 with {\arrow[scale=2,>=stealth]{>>}}},
postaction={decorate}] (-1.7320508076,-1) -- (1.7320508076,-1);

\draw [thick, decoration={markings,
mark=at position 0.65 with {\arrow[scale=2,>=stealth]{<<}}},
postaction={decorate}] (0,2) -- (1.7320508076,-1);

\node at (0,-1.6) {(a)};

\draw[fill=black] (5.1961524228,2) circle (3pt);
\draw[fill=black] (3.4641016152,-1) circle (3pt);
\draw[fill=black] (6.9282032304,-1) circle (3pt);

\draw [thick, decoration={markings,
mark=at position 0.65 with {\arrow[scale=2,>=stealth]{><}}},
postaction={decorate}] (5.1961524228,2) -- (3.4641016152,-1);

\draw [thick, decoration={markings,
mark=at position 0.65 with {\arrow[scale=2,>=stealth]{>>}}},
postaction={decorate}] (3.4641016152,-1) -- (6.9282032304,-1);

\draw [thick, decoration={markings,
mark=at position 0.65 with {\arrow[scale=2,>=stealth]{><}}},
postaction={decorate}] (5.1961524228,2) -- (6.9282032304,-1);

\node at (5.1961524228,-1.6) {(b)};
\end{tikzpicture}
\captionof{figure}{ }
\end{center}
If $M_{F}$ is of type (M3) or (M4), then
every zigzag from ${\mathcal Z}(F)$ goes through one edge twice in the same direction and through the remaining two edges twice in different directions (Fig.3b).
If $\Gamma$ is $z$-knotted, then  there is a single zigzag (up to reverse) which passes through every edge of $\Gamma$ twice and 
each face of $\Gamma$ has one of the types described on Fig.3.

\section{Main result}
Consider the dual graph $\Gamma^{*}$ whose vertices are faces of $\Gamma$ and whose edges are pairs of faces intersecting in an edge.
Denote by ${\mathrm G}_{1}$ the subgraph in $\Gamma^{*}$ consisting of all faces whose $z$-monodromies are identity, i.e. of type (M1);
two such faces are adjacent vertices of ${\mathrm G}_{1}$ if they are adjacent vertices of $\Gamma^{*}$.
Similarly, we define the subgraph ${\mathrm G}_{2}$ formed by all faces with $z$-monodromies of type (M2).
Our main result is the following.

\begin{theorem}
The graphs ${\mathrm G}_{1}$ and ${\mathrm G}_2$ are forests.
\end{theorem}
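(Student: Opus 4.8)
The plan is to treat both graphs at once through the common feature of types (M1) and (M2). By the description preceding Figure 3, whenever $M_F$ is of type (M1) or (M2) the triangulation is locally $z$-knotted in $F$, so $\mathcal Z(F)=\{Z,Z^{-1}\}$, and the single zigzag $Z$ runs through each edge of $F$ twice in the same direction, realizing one of the two $3$-cycles of $D_F$; call this cyclic order $C_F$. First I would record the elementary but crucial fact that adjacent vertices of $\mathrm G_1$ (resp. $\mathrm G_2$) lie on a common zigzag: if $F,F'$ share an edge and both have type (M1) [resp. (M2)], then the zigzag through $F$ uses that edge, hence belongs to $\mathcal Z(F')$, so it coincides with the zigzag through $F'$ up to reversal. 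Consequently, along any connected component $\mathcal C$ of $\mathrm G_i$ there is a single zigzag $Z$ (up to reversal) threading every face, and it suffices to prove that each such $\mathcal C$ is a tree.

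Next I would extract the local combinatorics at a face. Viewing $Z$ as a sequence of edges, each $F\in\mathcal C$ is met in exactly three passes, one turning at each corner of $F$, the exit edge of one pass being the entry edge of the next pass through $F$. Two pieces of data emerge. (a) The orientation datum: at every corner of every face of $\mathcal C$ the two incident edges carry $Z$-directions pointing one into and one out of that corner; equivalently, reading $C_F$ as an orientation of the triangle $F$ and noting that two faces sharing an edge run along it in the same direction, $Z$ induces an orientation of each triangle of $N=\bigcup_{F\in\mathcal C}F$ for which adjacent triangles direct their common edge identically. (b) The order datum, which is exactly where (M1) and (M2) differ: unwinding the definition of $M_F$ shows that $M_F$ records the cyclic order in which $Z$ returns to the three corner-passes of $F$, this order being $C_F$ when $M_F=\mathrm{id}$ and its reverse $C_F^{-1}$ when $M_F=D_F$. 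In particular, the exit edge of a pass interior to $\mathcal C$ forces the very next pass through the neighbouring face, giving forced consecutive passes inside the cyclic sequence $Z$.

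Assuming $\mathcal C$ contains a cycle, I would first dispose of the minimal case to expose the mechanism. Since $\Gamma^*$ is simple, the shortest cycle has length $3$, and a short computation shows three pairwise-adjacent triangles must be the star of a vertex $O$ of degree $3$ (the alternative, that the three shared edges bound a triangle, forces the three faces to coincide). But datum (a) applied to the three passes at $O$ forces the three edges at $O$ to be pairwise oppositely directed at $O$, which is impossible for three edges. More generally datum (a) shows every interior vertex of $N$ has even degree and that, on an orientable piece, the sign of $C_F$ against a fixed orientation alternates across edges, so $\mathcal C$ is bipartite.

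The main obstacle is precisely that datum (a) alone yields only this parity and does not exclude even cycles; the order datum (b) must be brought in. My plan for the general step is to choose an innermost cycle of $\mathcal C$, bounding a disk $D$, and to analyze the single closed curve $Z$ on $D$ together with the forced consecutive passes: the forced successions across interior edges assemble into arcs of $Z$, while (b) fixes how the three passes of each face interleave, and around the inner boundary cycle of $D$ these two prescriptions overdetermine the cyclic order, producing a closed proper subloop of $Z$ — a second zigzag — contradicting that a single $Z$ threads $\mathcal C$. Equivalently, one may phrase this as an Euler-characteristic count: the orientation of datum (a) makes the Petrie surface carried by $N$ orientable, and counting its faces (the zigzags through $N$) against $V$ and $E$ forces the first Betti number of $\mathcal C$ to vanish. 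Carrying out this count, and in particular checking that the interaction of (a) and (b) leaves no room for a cycle, is the crux; the distinction between (M1) and (M2) enters only through the direction $C_F$ versus $C_F^{-1}$ of the order datum and is handled uniformly.
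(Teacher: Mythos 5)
Your opening reductions are sound and in fact reproduce the paper's groundwork: adjacent faces of type (M1) or (M2) are locally $z$-knotted, hence ${\mathcal Z}(F)={\mathcal Z}(F')=\{Z,Z^{-1}\}$, so one zigzag threads each component; each such face is met in exactly three passes (this is the second part of the paper's Lemma~\ref{l-1}); and your orientation datum (a) is a correct reading of Fig.~3a, with the consequence that the two faces at a shared edge traverse it in the same direction. Your disposal of dual $3$-cycles via the degree-$3$ vertex and the in/out parity at its corner is a correct, self-contained argument not in the paper. But the proof stops being a proof exactly where you say "the crux" lies. Two concrete failures: (i) an "innermost cycle of $\mathcal C$ bounding a disk $D$" need not exist --- the theorem is asserted for triangulations of arbitrary closed surfaces, orientable or not, and a simple cycle in $\Gamma^{*}$ can be non-contractible (run around a handle or a crosscap), so nothing reduces the general case to a disk; likewise your bipartiteness observation is explicitly restricted to orientable pieces and in any case only kills odd cycles. (ii) The decisive step --- that data (a) and (b) "overdetermine the cyclic order" around the boundary of $D$ and thereby split off a second zigzag, or equivalently that an Euler-characteristic count of the Petrie surface forces the first Betti number of $\mathcal C$ to vanish --- is announced, not carried out. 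As stated it is a program, and it is not clear it can be completed in this form, precisely because the topology of the ambient surface is unconstrained.

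The paper's proof avoids all of this topology by sharpening your datum (b) into precise interleaving lemmas and then doing pure bookkeeping against the three-occurrence bound of Lemma~\ref{l-1}. For two adjacent (M1)-faces the face shadow of the common zigzag has the nested pattern $F,F',\dots,F',\dots,F',F,\dots,F,\dots$ (Lemma~\ref{l-2}), while for (M2)-faces it has the linked pattern $F,F',\dots,F,\dots,F',F,\dots,F',\dots$ (Lemma~\ref{l-3}); these are proved by chasing $M_{F}$, $M_{F'}$, $D_{F}$, $D_{F'}$ through the sequence $e_{3},e_{1},e'_{2}$. Given a hypothetical cycle $F_{1},\dots,F_{n}=F_{1}$, one then shows inductively that each $F_{i}$ with $i\ge 3$ occurs in the face shadow only after all three occurrences of $F_{1}$, so closing the cycle at $F_{n}=F_{1}$ would force a fourth occurrence of $F_{1}$, contradicting Lemma~\ref{l-1}. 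Note also that the (M2) case genuinely resists your hope of uniform treatment "up to the direction of $C_{F}$": the linked pattern requires a case split (whether $F_{1},\dots,F_{n}$ occur consecutively in the shadow, plus a passage to $Z^{-1}$ when $k=2$) that has no analogue for the nested (M1) pattern. So the missing content is exactly the interleaving lemmas and the induction around the cycle; with those supplied, no disk, no orientability hypothesis, and no Euler-characteristic count are needed.
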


Let $\Gamma_{1}$ and $\Gamma_2$ be triangulations of connected closed $2$-dimensional surfaces $M_1$ and $M_{2}$,
respectively.
Consider a face $F_{1}$ in $\Gamma_{1}$, a face $F_{2}$ in $\Gamma_{2}$
and a homeomorphism $g: \partial F_{1} \to \partial F_{2}$ which sends every vertex of $F_{1}$ to a vertex of $F_{2}$, i.e.
if $v_{i}$, $i\in \{1,2,3\}$ are the vertices of $F_{1}$, then $w_{i}=g(v_{i})$, $i\in \{1,2,3\}$ are the vertices of $F_{2}$.
Such boundary homeomorphisms are said to be  {\it special}.
The {\it connected sum} $\Gamma_{1} \#_{g} \Gamma_{2}$ is the graph whose vertex set is the union of the vertex sets of 
$\Gamma_{1}$ and $\Gamma_{2}$,
where each $v_i$ is identified with $w_i$, and the edge set is the union of the edge sets of $\Gamma_{1}$ and $\Gamma_{2}$, 
where the edge $v_iv_j$ is identified with the edge $w_iw_j$.
This is a triangulation of the connected sum of the surfaces $M_{1}$ and $M_{2}$.
For other special homeomorphism $h: \partial F_{1} \to \partial F_{2}$ the graph $\Gamma_{1} \#_{h} \Gamma_{2}$ 
is not necessarily isomorphic to $\Gamma_{1} \#_{g} \Gamma_{2}$.

In the case when $\Gamma_{1}$ and $\Gamma_2$ are $z$-knotted and the $z$-monodromies of $F_{1}$ and $F_{2}$ are not identity,
there is a special homeomorphism $g: \partial F_{1} \to \partial F_{2}$ such that the connected sum $\Gamma_{1} \#_{g} \Gamma_{2}$ is $z$-knotted
 \cite[Theorem 4]{PT2}.
By Theorem 1, each triangulation contains a face whose $z$-monodromy is not identity. 
So, we get the following.

\begin{cor}
For any $z$-knotted triangulations  $\Gamma_{1}$ and $\Gamma_{2}$ of connected closed $2$-dimensional surfaces 
there are faces $F_{1}$ and $F_{2}$ in $\Gamma_{1}$ and $\Gamma_{2}$ {\rm(}respectively{\rm)} 
and a special homeomorphism $g: \partial F_{1} \to \partial F_{2}$ such that 
the connected sum $\Gamma_{1} \#_{g} \Gamma_{2}$ is a $z$-knotted triangulation.
\end{cor}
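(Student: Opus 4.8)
The plan is to prove the contrapositive of the main result behind the Corollary: a cycle in $\mathrm{G}_1$ (or in $\mathrm{G}_2$) leads to a contradiction, so each of these graphs is acyclic, i.e. a forest; the Corollary then follows at once. First I would record a reduction. If two faces $F,F'$ sharing an edge $g$ both have $z$-monodromy of type (M1), or both of type (M2), then by the classification each is locally $z$-knotted, so $\mathcal Z(F)=\{Z,Z^{-1}\}$ and $\mathcal Z(F')=\{Z',Z'^{-1}\}$; since every zigzag through $g$ lies in both sets, $Z'=Z$ up to reverse. Consequently, along any cycle $F_1,\dots,F_k$ in $\mathrm G_1$ (resp. $\mathrm G_2$), with $g_i=F_i\cap F_{i+1}$ (indices mod $k$), all the $F_i$ lie on one common zigzag $Z$, and by the description following (M1)--(M4) this $Z$ runs twice through each edge of every $F_i$, always in the direction of the corresponding cycle of $D_{F_i}$.

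For type (M1) I would then analyse $Z$ locally at $g_i$. Writing $Z$ as a cyclic word in its oriented edges, each $g_i$ occurs exactly twice; because $M_{F_i}$ is the identity, after $Z$ leaves $F_i$ through $g_i$ it first returns to $F_i$ through the same edge $g_i$. Hence the two occurrences of $g_i$ cut $Z$ into two arcs, and the arc $I_i$ lying on the $F_{i+1}$-side is exactly the excursion of $Z$ out of $F_i$ through $g_i$. The key claim is that $I_i$ contains all three passages of $Z$ through $F_{i+1}$: indeed $I_i$ begins and ends with the two passages of $F_{i+1}$ incident to $g_i$, and the identity monodromy of $F_{i+1}$ forces its third passage, together with its excursions through the other two edges, to lie strictly between them. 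In particular $I_i$ contains both occurrences of $g_{i+1}$, hence contains the analogous arc $I_{i+1}$; and the inclusion is strict, since $I_i$ also contains passages of $F_{i+1}$ that do not belong to $I_{i+1}$.

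This yields a descending chain $I_1\supsetneq I_2\supsetneq\cdots\supsetneq I_k\supsetneq I_1$ of sub-arcs of a finite cyclic word, which is impossible; so $\mathrm G_1$ has no cycle. For type (M2) the same strategy applies, the only change being that $M_{F_i}=D_{F_i}$ sends $g_i$ to $D_{F_i}(g_i)$, so that $Z$ re-enters $F_i$ not through $g_i$ but through the neighbouring edge prescribed by $D_{F_i}$; one then has to define the nested arcs using that edge and check that $F_{i+1}$ is still swallowed strictly. Verifying this strict-nesting claim for (M2), where the two passes through $g_i$ no longer bound a single excursion, is the step I expect to be the main obstacle; the (M1) case and the impossibility of an infinite descending chain around a cycle are by contrast immediate.

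Finally I would deduce the Corollary. The dual graph $\Gamma^{*}$ is $3$-regular and connected, hence not a forest; so if every face were of type (M1) we would have $\mathrm G_1=\Gamma^{*}$, contradicting that $\mathrm G_1$ is a forest. Thus every triangulation has a face whose $z$-monodromy is not the identity. Applying this to $\Gamma_1$ and to $\Gamma_2$ to select faces $F_1,F_2$ with non-identity monodromies, and invoking \cite[Theorem 4]{PT2}, produces a special homeomorphism $g\colon\partial F_1\to\partial F_2$ for which $\Gamma_1\#_g\Gamma_2$ is $z$-knotted, which is the assertion of the Corollary.
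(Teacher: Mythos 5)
Your proposal is correct, and for the Corollary itself it follows the paper's route exactly: the paper likewise deduces the statement from the forest property of $\mathrm{G}_{1}$ (every face has three edges and every edge lies in exactly two faces, so $\Gamma^{*}$ is $3$-regular and hence not a forest, forcing a face with non-identity $z$-monodromy) combined with \cite[Theorem 4]{PT2}; you merely make explicit the regularity count that the paper leaves implicit. The obstacle you flag in the (M2) case is immaterial for this statement: your own concluding paragraph, like the paper's, uses only that $\mathrm{G}_{1}$ is a forest, so the Corollary does not need $\mathrm{G}_{2}$ at all. Your nested-arc proof that $\mathrm{G}_{1}$ is acyclic is a sound repackaging of the paper's Section 5: your ``key claim'' that the excursion $I_{i}$ swallows all three passages through $F_{i+1}$ is precisely Lemma \ref{l-2} (the face shadow has the clustered form $F,F',\dots,F',\dots,F',F,\dots,F,\dots$), and your strictly descending cyclic chain $I_{1}\supsetneq I_{2}\supsetneq\cdots\supsetneq I_{k}\supsetneq I_{1}$ is the same recursion around the cycle that the paper closes via Lemma \ref{l-1} (at most three occurrences of $F_{1}$ in the shadow). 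Your suspicion about (M2) is also well founded: there the shadow of Lemma \ref{l-3} interleaves as $F,F',\dots,F,\dots,F',F,\dots,F',\dots$, so the two passes through the shared edge do not bound a single excursion and the naive nesting genuinely breaks; the paper handles this with a separate case analysis in Section 6 (taking the maximal consecutive run $F_{1},\dots,F_{k}$ in the shadow and passing to $Z^{-1}$ when $k=2$) --- but, again, none of that is needed to prove this Corollary.
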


\section{The graph ${\mathrm G}_{1}$ is a forest}

The {\it face shadow} of a zigzag $Z=\{e_{1},\dots,e_{n}\}$ is a cyclic sequence of faces $F_{1},\dots, F_{n}$, 
where $F_{i}$ is the face containing the edges $e_{i}$ and $e_{i+1}$.
Any two consecutive faces in this sequence are adjacent.

\begin{lemma}\label{l-1} 
Let $F$ be a face belonging to the face shadow $F_{1},\dots, F_{n}$ of a certain zigzag.
Then there is at most three distinct indices  $i$ such that $F_{i}=F$.
If our triangulation is locally $z$-knotted in $F$, then there are precisely three such $i$.
\end{lemma}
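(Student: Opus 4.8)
The plan is to encode each appearance of $F$ by an oriented edge of $F$ and then exploit that reversing a zigzag reverses its face shadow. Fix the zigzag $Z=\{e_1,\dots,e_n\}$ with face shadow $F_1,\dots,F_n$, and suppose $F_i=F$. Then $e_i$ and $e_{i+1}$ are two distinct edges of the triangle $F$, so they share exactly one vertex $v$; I orient the exit edge $e_{i+1}$ away from $v$ and call this element $\omega_i\in\Omega(F)$ (the exit edge taken in the direction of travel). With the convention recorded in Section~3 the entry edge $e_i$, oriented toward $v$, is exactly $D_F^{-1}(\omega_i)$, so $\omega_i$ recovers the ordered pair $(e_i,e_{i+1})$ together with the direction of travel. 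The first step is to observe that $i\mapsto\omega_i$ is injective: two indices with the same $\omega_i$ would give the same consecutive pair of oriented edges, and since a zigzag is completely determined by any pair of consecutive edges this forces a period smaller than $n$, contradicting minimality of $n$. Hence the number of indices $i$ with $F_i=F$ equals $|S|$, where $S=\{\omega_i : F_i=F\}\subseteq\Omega(F)$.

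The key step is to identify $S$ zigzag-theoretically. For $e\in\Omega(F)$ let $Z_e$ be the unique zigzag containing the sequence $D_F^{-1}(e),e$, i.e.\ the zigzag coming out of $F$ through $e$ in the sense of the $z$-monodromy definition. By the previous paragraph, $e\in S$ if and only if $Z_e=Z$. Applying the same analysis to the reversed zigzag $Z^{-1}$ produces a set $S'\subseteq\Omega(F)$ with $e\in S'$ if and only if $Z_e=Z^{-1}$. Two facts now do the work. First, since no zigzag is self-reversed we have $Z\neq Z^{-1}$, so $S\cap S'=\emptyset$. Second, the face shadow of $Z^{-1}$ is the face shadow of $Z$ read backwards, hence $F$ occurs equally often in both; therefore $|S|=|S'|$.

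Combining these gives $2|S|=|S|+|S'|=|S\cup S'|\le|\Omega(F)|=6$, so $|S|\le 3$, which is the asserted bound of at most three indices. If $\Gamma$ is locally $z$-knotted in $F$, then $\mathcal Z(F)=\{Z,Z^{-1}\}$, so for every $e\in\Omega(F)$ the zigzag $Z_e$ lies in $\mathcal Z(F)$ and is thus $Z$ or $Z^{-1}$; this means $S\cup S'=\Omega(F)$, and together with $|S|=|S'|$ we get $6=2|S|$, i.e.\ $|S|=3$. Thus there are precisely three indices $i$ with $F_i=F$, as claimed; this is also consistent with the descriptions in Fig.~3, where each of the three passages contributes one entry edge and one exit edge, accounting for the six elements of $\Omega(F)$.

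I expect the main obstacle to be the bookkeeping in the key step, namely pinning down the orientation convention for $\omega_i$ so that the pair ``entry $D_F^{-1}(e)$, exit $e$'' matches the $z$-monodromy definition verbatim, and then checking that reading $Z$ backwards genuinely turns a $Z$-appearance of $F$ into a $Z^{-1}$-appearance with its exit edge lying in $S'$. Once the bijection between appearances and exit edges and the relation $e\in S'\iff Z_e=Z^{-1}$ are set up correctly, the disjointness $S\cap S'=\emptyset$ and the equality $|S|=|S'|$ are immediate, and the counting closes the argument.
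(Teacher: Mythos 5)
Your proof is correct and follows essentially the same route as the paper: both encode each appearance of $F$ in the face shadow by the oriented edge of $\Omega(F)$ through which the zigzag exits, and both obtain the bound from $|\Omega(F)|=6$ together with the fact that no zigzag is self-reversed. The only cosmetic difference is the bookkeeping: the paper pairs $\Omega(F)$ into three pairs via the involution $e\mapsto -D_F(e)$, with the two members of a pair yielding a zigzag and its reverse, whereas you split $\Omega(F)$ into the disjoint sets $S$ and $S'$ and get the factor of two from $|S|=|S'|$ via shadow reversal --- the same symmetry, slightly repackaged (and your explicit injectivity argument via minimality of the period makes precise a step the paper leaves implicit).
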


\begin{proof}
For every edge $e\in \Omega(F)$ we denote by $Z(e)$ the zigzag containing the sequence $e,D_{F}(e)$.
Observe that $Z(e')=Z(e)^{-1}$ if $e'=-D_{F}(e)$.
Also, it can be happened that  $Z(e)=Z(e')$ for some distinct $e,e'\in \Omega(F)$.
This means that ${\mathcal Z}(F)$ contains at most three pairs of zigzags $Z,Z^{-1}$.
Therefore, if $F_{1},\dots, F_{n}$ is the shadow of a zigzag from ${\mathcal Z}(F)$ and $F_{i}=F$ for four distinct indices $i$,
then this zigzag is self-reversed which is impossible.
In the case when the triangulation is locally $z$-knotted in $F$, 
for any two $e,e'\in \Omega(F)$ we have $Z(e)=Z(e')$ or $Z(e)=Z(e')^{-1}$ which implies the second statement.
\end{proof}

\begin{lemma}\label{l-2}
Let $F$ and $F'$ be adjacent faces whose $z$-monodromies both are identity.
Then there is a unique {\rm(}up to reverse{\rm)} zigzag whose face shadow contains $F$ and $F'$.
This face shadow is a cyclic sequence of type 
$$F,F',\dots,F',\dots,F',F,\dots,F,\dots$$
(Fig.4)\footnote{The reversed sequence
$F',F,\dots,F,\dots,F,F',\dots,F',\dots$
is the face shadow of the reversed zigzag.}.
\end{lemma}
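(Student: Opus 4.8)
The plan is to first identify which zigzag is involved and then to read off its face shadow directly from the two identity monodromies. Write $u,v,w$ for the vertices of $F$ and $u,v,w'$ for those of $F'$, so that the common edge is $uv$; two adjacent faces of a triangulation meet in exactly one edge, so $uv$ is the only edge of $F$ lying in $F'$. Since $M_F$ is of type (M1), the triangulation is locally $z$-knotted in $F$, whence $\mathcal Z(F)=\{Z,Z^{-1}\}$; replacing $Z$ by $Z^{-1}$ if needed, I assume that $Z$ traverses $uv$ in the direction $uv$. Because $uv$ is an edge of $F'$ as well, $Z$ contains edges of $F'$, so $Z\in\mathcal Z(F')$, and as $F'$ is also locally $z$-knotted we get $\mathcal Z(F')=\{Z,Z^{-1}\}$. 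In particular, every zigzag whose shadow meets $F$ lies in $\{Z,Z^{-1}\}$, which already yields the asserted uniqueness.

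Next I would describe the passes of $Z$. Whenever the shadow equals a face $G$, the two oriented edges traversed there are a consecutive pair $(x,D_G(x))$: entering $G$ through $x$ forces leaving through $D_G(x)$, exactly as in the definition of $M_G$. By Lemma~\ref{l-1} the face $F$ occurs precisely three times, giving three $F$-passes; since for type (M1) the zigzag runs twice through a single $D_F$-cycle (the situation of Fig.3a), that cycle is $(uv,vw,wu)$, and a short count of edge-uses shows the three passes are exactly $(uv,vw)$, $(vw,wu)$, $(wu,uv)$, each once. The definition of $M_F$ says precisely that the edge through which a pass leaves is sent by $M_F$ to the edge through which the \emph{next} $F$-pass enters; as $M_F=\mathrm{id}$, these two edges coincide, so the three $F$-passes succeed one another cyclically as $(uv,vw)\to(vw,wu)\to(wu,uv)$. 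The identical argument for $F'$, now with the $D_{F'}$-cycle $(uv,vw',w'u)$ through $uv$, produces three $F'$-passes in the cyclic order $(uv,vw')\to(vw',w'u)\to(w'u,uv)$.

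The last step is to glue the two cyclic chains along $uv$. A consecutive pair of faces in a shadow shares exactly the edge traversed between them, so $F$ and $F'$ are neighbours in the shadow only at a traversal of $uv$, of which there are exactly two. One is the leaving edge of the $F$-pass $(wu,uv)$ and, on the far side, the entering edge of the $F'$-pass $(uv,vw')$, making these two passes consecutive; the other is the entering edge of the $F$-pass $(uv,vw)$ and the leaving edge of the $F'$-pass $(w'u,uv)$, making those two consecutive. No two $F$-passes (or two $F'$-passes) can be adjacent, since a zigzag has no three consecutive edges in one face, and there is no further $F$–$F'$ adjacency; hence these two gluings are the only adjacencies among the six distinguished passes. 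Threading the two rigid cyclic orders through them forces the cyclic sequence of distinguished passes to be $(wu,uv),(uv,vw'),(vw',w'u),(w'u,uv),(uv,vw),(vw,wu)$, with only faces distinct from $F,F'$ in between, and reading off $F$ and $F'$ gives exactly $F,F',\dots,F',\dots,F',F,\dots,F,\dots$\,, as claimed.

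I expect the orientation bookkeeping to be the delicate point. One must check that fixing the direction $uv$ for the common edge is simultaneously compatible with the $D_F$-cycle used by $Z$ in $F$ and with the $D_{F'}$-cycle used in $F'$, and that the two gluings attach the chains head-to-tail in the stated way rather than the reversed way; this is what pins down which of the two $uv$-traversals lies between $(wu,uv)$ and $(uv,vw')$ and which between $(w'u,uv)$ and $(uv,vw)$. Once the ``leaving edge $\mapsto$ entering edge of the next pass'' reading of $M_F$ is in place, the rest is forced and the pattern drops out.
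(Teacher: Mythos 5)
Your proof is correct, and it is organized genuinely differently from the paper's. The paper argues by a forward trace: it fixes the sequence $e_3,e_1,e'_2$ entering $F$ and exiting into $F'$, then repeatedly asks which element of $\Omega(F)\cup\Omega(F')$ occurs next in $Z$, eliminating the wrong alternative at each step (e.g.\ the next distinguished edge cannot lie in $\Omega(F)$, since it would have to equal $M_F(e_1)=e_1$, and one can reach $e_1$ only through a distinguished edge different from $e_1$); this writes $Z$ explicitly as $e_3,e_1,e'_2,X,e'_2,e'_3,Y,e'_3,e_1,e_2,\dots$ and then closes with the second part of Lemma~\ref{l-1}. You instead work globally: Lemma~\ref{l-1} plus the Fig.~3a description of type (M1) gives exactly three $F$-passes, your edge-use count ($n_1+n_3=n_1+n_2=n_2+n_3=2$ with $n_1+n_2+n_3=3$) pins them down as the three consecutive pairs of the cycle $(uv,vw,wu)$, each once; the reading of $M_F=\mathrm{id}$ as ``leaving edge of a pass $=$ entering edge of the next $F$-pass'' (which you use correctly --- the next occurrence of an element of $\Omega(F)$ must be an entering edge, else the traversal immediately before it would be an earlier occurrence) rigidifies the cyclic order of passes within each face, and the two traversals of the common edge force the head-to-tail gluing. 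The ``delicate point'' you flag at the end is in fact already settled by your own setup and is not a gap: after replacing $Z$ by $Z^{-1}$ both traversals of the common edge go in the direction $uv$, and since these traversals are literally shared between the two faces, the single $D_{F'}$-cycle used in $F'$ must be the one through $uv$; moreover, at each traversal of $uv$ the two consecutive pairs of the zigzag lie in distinct faces containing that edge, hence one in $F$ and one in $F'$, which is exactly the enter/leave pairing you assert, so the threading $(wu,uv),(uv,vw'),(vw',w'u),(w'u,uv),(uv,vw),(vw,wu)$ is forced and reproduces Fig.~4. As for what each approach buys: the paper's trace is shorter and needs no counting, while your pass bookkeeping makes the symmetry between $F$ and $F'$ explicit, shows that the six distinguished traversals are exhausted, and transfers to the (M2) case of Lemma~\ref{l-3} by changing only the succession rule from $e\mapsto e$ to $e\mapsto D_F(e)$.
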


\begin{center}
\begin{tikzpicture}[scale=0.6]
\draw (0,0) circle (2cm);
\draw[fill=black] (0:2cm) circle (3pt);
\draw[fill=black] (180:2cm) circle (3pt);

\draw[fill=black] (86:2cm) circle (3pt);
\draw[fill=black] (94:2cm) circle (3pt);
\draw[fill=black] (266:2cm) circle (3pt);
\draw[fill=black] (274:2cm) circle (3pt);

\node at (0:2.5cm) {$F'$};
\node at (180:2.5cm) {$F$};

\node at (84:2.5cm) {$F'$};
\node at (96:2.5cm) {$F$};
\node at (264:2.5cm) {$F$};
\node at (276:2.5cm) {$F'$};

\end{tikzpicture}
\captionof{figure}{ }
\end{center}

\begin{proof}
Let $x,y,z$ and $t,y,z$ be the vertices of $F$ and $F'$ (respectively)
and let
$$e_1=yz,\;\; e_2=zx,\;\; e_3=xy,\;\; e'_2=zt,\;\; e'_3=ty$$
(Fig.5).
The intersection of $\Omega(F)$ and $\Omega(F')$ is $\{e_{1},-e_{1}\}$.
\begin{center}
\begin{tikzpicture}[scale=0.8]
\draw[fill=black] (90:1.5) circle (3pt);
\draw[fill=black] (210:1.5) circle (3pt);
\draw[fill=black] (330:1.5) circle (3pt);
\draw[fill=black] (-90:3) circle (3pt);

\draw [black,line width=1.25pt]  (90:1.5) -- (210:1.5) -- (-90:3) -- (330:1.5)  -- cycle;
\draw [black,line width=1.25pt]  (330:1.5) -- (210:1.5);

\node at (0, 0.1) {$F$};
\node at (0, -1.6) {$F'$};

\draw [thick, decoration={markings,
mark=at position 0.57 with {\arrow[scale=2,>=stealth]{>}}},
postaction={decorate}] (90:1.5) -- (210:1.5);

\draw [thick, decoration={markings,
mark=at position 0.57 with {\arrow[scale=2,>=stealth]{>}}},
postaction={decorate}] (-90:3) -- (210:1.5);

\draw [thick, decoration={markings,
mark=at position 0.57 with {\arrow[scale=2,>=stealth]{>}}},
postaction={decorate}] (330:1.5) -- (-90:3);

\draw [thick, decoration={markings,
mark=at position 0.57 with {\arrow[scale=2,>=stealth]{>}}},
postaction={decorate}] (330:1.5) -- (90:1.5);

\draw [thick, decoration={markings,
mark=at position 0.57 with {\arrow[scale=2,>=stealth]{>}}},
postaction={decorate}] (210:1.5) -- (330:1.5);

\node at (-0.3,1.5) {$x$};
\node at (-1.6, -0.75) {$y$};
\node at (1.6, -0.75) {$z$};
\node at (-0.3,-3) {$t$};

\node at (0,-0.45) {$e_1$};
\node at (0.95,0.5) {$e_2$};
\node at (-0.95,0.5) {$e_3$};
\node at (0.95,-2) {$e'_2$};
\node at (-0.95,-2) {$e'_3$};

\end{tikzpicture}
\captionof{figure}{ }
\end{center}
Since the $z$-monodromies $M_{F}$ and $M_{F'}$ both are identity,
the triangulation $\Gamma$ is locally $z$-knotted in $F$ and $F'$. 
The faces $F$ and $F'$ are adjacent and we have
$${\mathcal Z}(F)={\mathcal Z}(F')=\{Z,Z^{-1}\}.$$
Suppose that $Z$ is the zigzag containing the sequence $e_{3},e_{1},e'_{2}$.

We determine the first edge $e$ from $\Omega(F)\cup \Omega(F')$ which occurs in the zigzag $Z$ after this sequence.
If this edge belong to $\Omega(F)$, then it coincides with $M_{F}(e_{1})=e_{1}$ which is impossible
(we can come to $e_{1}$ by a zigzag only through an element of $\Omega(F)$ or $\Omega(F')$ different from $e_1$).
Therefore, $e$ belongs to $\Omega(F')$. This implies that $e=M_{F'}(e'_{2})=e'_{2}$.
The next edge of $Z$ is $D_{F'}(e'_{2})=e'_{3}$ and the zigzag $Z$ is a cyclic sequence of type 
$$e_{3},e_{1},e'_{2},X,e'_{2},e'_{3},\dots,$$
where $X$ is a sequence of edges which does not contain elements of $\Omega(F)\cup \Omega(F')$.
Similarly, we establish that the first edge from $\Omega(F)\cup \Omega(F')$ 
which occurs in the zigzag $Z$ after the sequence $e'_{2},e'_{3}$ is $e'_{3}$.
The next two edges of $Z$ are $D_{F'}(e'_{3})=e_{1}$ and $D_{F}(e_{1})=e_{2}$, i.e.
the zigzag $Z$ is a cyclic sequence of type
$$\underbrace{e_3, e_1, e'_2}_{F,F'}, X, \underbrace{e'_2, e'_3}_{F'}, Y,  \underbrace{e'_3, e_1, e_2}_{F',F},\dots,$$
where $Y$ is a sequence of edges which does not contains elements of $\Omega(F)\cup \Omega(F')$.
The required statement follows from the second part of Lemma 1\footnote{Note that the next two edges from $\Omega(F)\cup \Omega(F')$ 
contained in the zigzag $Z$ are $M_{F}(e_{2})=e_{2}$ and $D_{F}(e_{2})=e_{3}$.}.
\end{proof}

Suppose that $F_1, F_2,\dots, F_n=F_{1}$, $n\ge 4$ is a simple cycle in the graph ${\mathrm G}_1$. 
Our triangulation is locally $z$-knotted in each $F_{i}$.
Since $F_{i}$ and $F_{i+1}$ are adjacent, we have ${\mathcal Z}(F_{i})={\mathcal Z}(F_{i+1})$.
Therefore,
$${\mathcal Z}(F_{1})=\dots ={\mathcal Z}(F_{n-1})=\{Z,Z^{-1}\}.$$
Applying Lemma \ref{l-2} to the faces $F_{1}$ and $F_2$, we obtain that the face shadow of $Z$ or $Z^{-1}$ is a cyclic sequence of type
$$F_{2},F_{1},\dots,F_{1},\dots,F_{1},F_{2},\dots,F_{2},\dots;$$
in what follows, we will assume that this is the face shadow of $Z$.
The face $F_3$  is adjacent to $F_2$ and we have four possibilities to occurring this face in $Z$ (Fig.6).
\begin{center}
\begin{tikzpicture}[scale=0.6]
\draw (0,0) circle (2cm);
\draw[fill=black] (0:2cm) circle (3pt);
\draw[fill=black] (180:2cm) circle (3pt);

\draw[fill=black] (86:2cm) circle (3pt);
\draw[fill=black] (94:2cm) circle (3pt);
\draw[fill=black] (266:2cm) circle (3pt);
\draw[fill=black] (274:2cm) circle (3pt);

\draw[fill=white] (102:2cm) circle (3pt);
\draw[fill=white] (172:2cm) circle (3pt);
\draw[fill=white] (188:2cm) circle (3pt);
\draw[fill=white] (258:2cm) circle (3pt);

\node at (0:2.5cm) {$F_1$};
\node at (180:2.5cm) {$F_2$};

\node at (83:2.5cm) {$F_1$};
\node at (97:2.5cm) {$F_2$};
\node at (263:2.5cm) {$F_2$};
\node at (277:2.5cm) {$F_1$};

\end{tikzpicture}
\captionof{figure}{ }
\end{center}
Lemma \ref{l-2} shows that 
for the face shadow of $Z$ one of the following possibilities is realized:
$$F_{3},F_{2},F_{1},\dots, F_{1},\dots, F_{1},F_{2},\dots, F_{2},F_{3},\dots,F_{3},\dots$$
or 
$$F_{2},F_{1},\dots,F_{1},\dots,F_{1},F_{2},F_{3},\dots, F_{3},\dots,F_{3},F_{2},\dots$$
(Fig.7a and Fig.7b, respectively).
\begin{center}
\begin{tikzpicture}[scale=0.8]
\draw (0,0) circle (2cm);
\draw[fill=black] (0:2cm) circle (2.8pt);
\draw[fill=black] (180:2cm) circle (2.8pt);

\draw[fill=black] (86:2cm) circle (2.8pt);
\draw[fill=black] (94:2cm) circle (2.8pt);
\draw[fill=black] (266:2cm) circle (2.8pt);
\draw[fill=black] (274:2cm) circle (2.8pt);

\draw[fill=black] (102:2cm) circle (2.8pt);
\draw[fill=black] (172:2cm) circle (2.8pt);
\draw[fill=black] (135:2cm) circle (2.8pt);

\node at (0:2.4cm) {$F_1$};
\node at (180:2.4cm) {$F_2$};

\node at (84:2.35cm) {$F_1$};
\node at (95:2.35cm) {$F_2$};
\node at (264:2.4cm) {$F_2$};
\node at (276:2.4cm) {$F_1$};

\node at (107:2.35cm) {$F_3$};
\node at (171:2.4cm) {$F_3$};
\node at (135:2.4cm) {$F_3$};

\node at (270:3cm) {$(a)$};
%%%%%%%%%%%%%%%%%%%

\draw[xshift=6cm] (0,0) circle (2cm);
\draw[xshift=6cm,fill=black] (0:2cm) circle (2.8pt);
\draw[xshift=6cm,fill=black] (180:2cm) circle (2.8pt);

\draw[xshift=6cm,fill=black] (86:2cm) circle (2.8pt);
\draw[xshift=6cm,fill=black] (94:2cm) circle (2.8pt);
\draw[xshift=6cm,fill=black] (266:2cm) circle (2.8pt);
\draw[xshift=6cm,fill=black] (274:2cm) circle (2.8pt);

\draw[xshift=6cm,fill=black] (220:2cm) circle (2.8pt);
\draw[xshift=6cm,fill=black] (258:2cm) circle (2.8pt);
\draw[xshift=6cm,fill=black] (172:2cm) circle (2.8pt);

\node[xshift=4.8cm] at (0:2.4cm) {$F_1$};
\node[xshift=4.8cm] at (180:2.4cm) {$F_3$};

\node[xshift=4.8cm] at (84:2.35cm) {$F_1$};
\node[xshift=4.8cm] at (96:2.35cm) {$F_2$};
\node[xshift=4.8cm] at (265:2.4cm) {$F_2$};
\node[xshift=4.8cm] at (276:2.4cm) {$F_1$};

\node[xshift=4.8cm] at (171:2.4cm) {$F_2$};
\node[xshift=4.8cm] at (254:2.4cm) {$F_3$};
\node[xshift=4.8cm] at (220:2.4cm) {$F_3$};

\node[xshift=4.8cm] at (270:3cm) {$(b)$};
\end{tikzpicture}
\captionof{figure}{ }
\end{center}
Apply Lemma \ref{l-2} to the faces $F_{3}$ and $F_{4}$, we establish that $F_4$ always occurs 
in the face shadow of $Z$ after the three times of $F_{1}$.
Recursively, we show that the same holds for every face $F_{i}$ if $3\le i\le n$, i.e.
the face shadow of $Z$  is a sequence of type
$$F_{2},F_{1},\dots,F_{1},\dots, F_{1},F_{2},\dots,F_{i},\dots,F_{i},\dots,F_{i},\dots.$$
In the case when $F_{i}=F_{n}=F_{1}$, this contradicts the fact that the face shadow of $Z$ contains only three times of $F_{1}$.
So, ${\mathrm G}_{1}$ does not contain cycles.

\section{The graph ${\mathrm G}_{2}$ is a forest} 
\begin{lemma}\label{l-3}
Let $F$ and $F'$ be adjacent faces whose $z$-monodromies are of type {\rm (M2)}, i.e. $D_F$ and $D_{F'}$, respectively.
Then there is a unique {\rm(}up to reverse{\rm)} zigzag whose face shadow contains $F$ and $F'$.
This faces shadow is a cyclic sequence of type 
$$F,F',\dots,F,\dots,F',F,\dots,F',\dots$$
(Fig.8)\footnote{As in Lemma \ref{l-2}, the reversed sequence $F',F,\dots,F,'\dots,F,F',\dots,F,\dots$
is the face shadow of the reversed zigzag.}.
\end{lemma}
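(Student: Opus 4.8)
<br>

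The plan is to mimic the structure of the proof of Lemma~\ref{l-2}, but to track carefully how the two monodromies being $D_F$ and $D_{F'}$ (rather than the identity) change the way a zigzag enters and leaves the two faces. As before, I would set up coordinates for the shared configuration: let $x,y,z$ be the vertices of $F$ and $t,y,z$ the vertices of $F'$, with
$$e_1=yz,\quad e_2=zx,\quad e_3=xy,\quad e'_2=zt,\quad e'_3=ty,$$
so that $\Omega(F)\cap\Omega(F')=\{e_1,-e_1\}$. Since both monodromies are of type (M2), the triangulation is locally $z$-knotted in each of $F,F'$, hence $\mathcal{Z}(F)=\mathcal{Z}(F')=\{Z,Z^{-1}\}$ and there is a single zigzag up to reverse whose face shadow meets both faces. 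This establishes the uniqueness part immediately; the real work is to pin down the \emph{combinatorial pattern} of the shadow.

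First I would fix a starting sequence inside the shadow, say the one beginning $e_3,e_1,e'_2$ (passing from $F$ into $F'$ across the common edge), and then repeatedly apply the defining relation $M_F(e)=D_F(e)$ (and likewise for $F'$) to compute which oriented edge of $\Omega(F)\cup\Omega(F')$ the zigzag next returns to. The key computational difference from Lemma~\ref{l-2} is that under $M_F=D_F$ the monodromy sends an incoming edge to the \emph{next} edge in the $D_F$-cycle rather than fixing it, so the zigzag does not double back immediately on the same face; instead it threads alternately through $F$ and $F'$. Concretely, I would trace: after $e_3,e_1,e'_2$ the first return to $\Omega(F)\cup\Omega(F')$ should be governed by $M_{F'}(e'_2)=D_{F'}(e'_2)$, and I would verify this forces the next occurrence to be in $F$ again, producing the interleaving $F,F',\dots,F,\dots,F',F,\dots,F',\dots$ claimed in the statement. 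Each of the three pairs of returns is computed by one application of $M_F$ or $M_{F'}$, and the three unlabeled segments between them contain no edges of $\Omega(F)\cup\Omega(F')$ (exactly as the segments $X,Y$ did in Lemma~\ref{l-2}). The second part of Lemma~\ref{l-1} guarantees that each face appears precisely three times, which closes the count and certifies that the pattern is exactly the cyclic type displayed in Fig.~8.

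The main obstacle will be bookkeeping: because $M_F=D_F$ rather than the identity, the orientations no longer line up so that the zigzag simply revisits the same oriented edge, so I must be careful to distinguish $e_1$ from $-e_1$ (and similarly for the other shared/boundary edges) at each return, and to verify that the zigzag never closes up prematurely before all three visits to each face are realized. In other words, the delicate point is showing that the interleaving does not degenerate into a shorter cycle; here I would lean on the impossibility of a self-reversed zigzag (the sequence $e,e',\dots,e',e$ cannot occur) together with the exact count of three returns per face from Lemma~\ref{l-1}. Once these orientation checks are made, the cyclic structure $F,F',\dots,F,\dots,F',F,\dots,F',\dots$ follows directly, and I expect the remainder of the section to run a cycle-in-$\mathrm{G}_2$ argument paralleling the one used for $\mathrm{G}_1$: assume a simple cycle $F_1,\dots,F_n=F_1$ in $\mathrm{G}_2$, use this lemma to control how each successive $F_{i+1}$ must appear in the common shadow relative to the three occurrences of $F_1$, and derive a contradiction with the fact that $F_1$ occurs exactly three times.
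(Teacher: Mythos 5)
Your plan reproduces the paper's proof essentially verbatim: the same configuration $e_1,e_2,e_3,e'_2,e'_3$ from Fig.~5, the same starting sequence $e_3,e_1,e'_2$, the same dichotomy at each return between $\Omega(F)$ and $\Omega(F')$ resolved by the monodromy values $M_F=D_F$ and $M_{F'}=D_{F'}$ (the excluded branch yielding the contradiction $M_F(e_1)=e_1\ne e_2$), and the same appeal to the second part of Lemma~\ref{l-1} to certify exactly three occurrences of each face and hence the cyclic pattern of Fig.~8. One phrasing correction only: the first return after $e_3,e_1,e'_2$ is $M_F(e_1)=e_2\in\Omega(F)$, while $M_{F'}(e'_2)=e'_3$ governs the case that gets ruled out --- but since you state the correct outcome (the zigzag re-enters $F$), this is imprecision of wording rather than a gap in the argument.
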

\begin{center}
\begin{tikzpicture}[scale=0.6]
\draw (0,0) circle (2cm);
\draw[fill=black] (0:2cm) circle (3pt);
\draw[fill=black] (180:2cm) circle (3pt);

\draw[fill=black] (86:2cm) circle (3pt);
\draw[fill=black] (94:2cm) circle (3pt);
\draw[fill=black] (266:2cm) circle (3pt);
\draw[fill=black] (274:2cm) circle (3pt);

\node at (0:2.5cm) {$F$};
\node at (180:2.5cm) {$F'$};

\node at (84:2.5cm) {$F'$};
\node at (96:2.5cm) {$F$};
\node at (264:2.5cm) {$F$};
\node at (276:2.5cm) {$F'$};

\end{tikzpicture}
\captionof{figure}{ }
\end{center}
\begin{proof}
Let $F$ and $F'$ be as in the proof of Lemma \ref{l-2} (see Fig.5).
We have $M_{F}=D_F$ and $M_{F'}=D_{F'}$ which implies that the triangulation $\Gamma$ is locally $z$-knotted in $F$ and $F'$. 
As in the proof of Lemma \ref{l-2}, we obtain that
$${\mathcal Z}(F)={\mathcal Z}(F')=\{Z,Z^{-1}\}$$
and assume that $Z$ is the zigzag containing the sequence $e_{3},e_{1},e'_{2}$.

Let $e$ be the first edge from $\Omega(F)\cup \Omega(F')$ which occurs in the zigzag $Z$ after this sequence.
If $e$ belong to $\Omega(F')$, then $$e=M_{F'}(e'_2)=D_{F'}(e'_2)=e'_3$$ and
the next edge in the zigzag is $D_{F'}(e'_{3})=e_1$.
This means that $M_{F}(e_{1})=e_{1}$ which is impossible, since 
$$M_{F}(e_{1})=D_{F}(e_{1})=e_{2}.$$
Therefore, $e$ belongs to $\Omega(F)$. In this case, we have $e=M_{F}(e_{1})=e_{2}$.
Then the next edge in $Z$ is $D_{F}(e_{2})=e_{3}$ and the zigzag $Z$ is a cyclic sequence of type 
$$e_{3},e_{1},e'_{2},X,e_{2},e_{3},\dots,$$
where $X$ is a sequence of edges which does not contain elements of $\Omega(F)\cup \Omega(F')$.

Now, we determine the first edge $e'$ from  $\Omega(F)\cup \Omega(F')$  which occurs in the zigzag $Z$ after the sequence $e_{2},e_{3}$.
If it belongs to $\Omega(F)$, then 
$$e'=M_{F}(e_{3})=D_{F}(e_{3})=e_{1}$$ 
which is impossible, since we can come to $e_{1}$ by a zigzag only through an element of $\Omega(F)$ or $\Omega(F')$ different from $e_1$.
Hence $e'$ is an element of $\Omega(F')$. Then 
$$e'=M_{F'}(e'_{2})=D_{F'}(e'_{2})=e'_{3}.$$
The next two edges in the zigzag $Z$ are $D_{F'}(e'_{3})=e_{1}$ and $D_{F}(e_{1})=e_{2}$.
So, $Z$ is a cyclic sequence of type
$$\underbrace{e_{3},e_{1},e'_{2}}_{F,F'}, X, \underbrace{e_{2},e_{3}}_{F}, Y, \underbrace{e'_{3},e_{1},e_{2}}_{F',F},\dots,$$
where $Y$ is a sequence of edges which does not contains elements of $\Omega(F)\cup \Omega(F')$.
The second part of Lemma 1 gives the claim\footnote{The next two edges from $\Omega(F)\cup \Omega(F')$ 
contained in the zigzag $Z$ are $M_{F'}(e_{1})=e'_{2}$ and $D_{F'}(e'_{2})=e'_{3}$.}.
\end{proof}

Let $F_1, F_2,\dots, F_n=F_{1}$, $n\ge 4$ be a simple cycle  in the graph ${\mathrm G}_2$. 
Our triangulation is locally $z$-knotted in each $F_{i}$ and, as in the previous section, we have
$${\mathcal Z}(F_{1})=\dots ={\mathcal Z}(F_{n-1})=\{Z,Z^{-1}\}.$$
By Lemma \ref{l-3}, the face shadow of $Z$ or $Z^{-1}$ is a cyclic sequence of type
$$F_1,F_2,\dots,F_1,\dots,F_2,F_1,\dots,F_2,\dots$$
and we suppose that this holds for the face shadow of $Z$.

(1).
First, we consider the case when $F_1, F_2,\dots,F_n$ are consecutive faces in the face shadow of $Z$.
The face $F_3$ is adjacent to $F_2$ and Lemma \ref{l-3} shows that the face shadow of $Z$ is
$$F_1,F_2,F_3,\dots,F_1,\dots,F_2,F_1,\dots,F_3,F_2,\dots,F_3,\dots$$
(Fig.9a).
In the case when $n>4$, 
we apply Lemma \ref{l-3} to the adjacent faces $F_{i-1}$ and $F_i$ with $4\le i\le n-1$. Recursively, we 
establish that $F_{n-1}$ occurs in the face shadow of $Z$ as follows
$$F_1,\dots,F_{n-1},F_1,\dots,F_1,\dots,F_{n-1},\dots,F_{n-1},\dots$$
(Fig.9b).
\begin{center}
\begin{tikzpicture}[scale=0.8]
\draw (0,0) circle (2cm);
\draw[fill=black] (0:2cm) circle (2.8pt);
\draw[fill=black] (180:2cm) circle (2.8pt);

\draw[fill=black] (86:2cm) circle (2.8pt);
\draw[fill=black] (94:2cm) circle (2.8pt);
\draw[fill=black] (266:2cm) circle (2.8pt);
\draw[fill=black] (274:2cm) circle (2.8pt);

\draw[fill=black] (78:2cm) circle (2.8pt);
\draw[fill=black] (172:2cm) circle (2.8pt);
\draw[fill=black] (135:2cm) circle (2.8pt);

\node at (0:2.4cm) {$F_1$};
\node at (180:2.4cm) {$F_3$};

\node at (84:2.35cm) {$F_2$};
\node at (95:2.35cm) {$F_1$};
\node at (264:2.4cm) {$F_1$};
\node at (276:2.4cm) {$F_2$};

\node at (72:2.33cm) {$F_3$};
\node at (170:2.4cm) {$F_2$};
\node at (135:2.45cm) {$F_3$};

\node at (270:3cm) {$(a)$};
%%%%%%%%%%%%%%%%%%%

\draw[xshift=6cm] (0,0) circle (2cm);
\draw[xshift=6cm,fill=black] (356:2cm) circle (2.8pt);
\draw[xshift=6cm,fill=black] (4:2cm) circle (2.8pt);

\draw[xshift=6cm,fill=black] (90:2cm) circle (2.8pt);

\draw[xshift=6cm,fill=black] (120:2cm) circle (2.8pt);

\draw[xshift=6cm,fill=black] (150:2cm) circle (2.8pt);

\draw[xshift=6cm,fill=black] (270:2cm) circle (2.8pt);

\node[xshift=4.8cm] at (355:2.4cm) {$F_1$};
\node[xshift=4.8cm] at (5:2.65cm) {$F_{n-1}$};

\node[xshift=4.8cm] at (90:2.35cm) {$F_1$};

\node[xshift=4.8cm] at (270:2.4cm) {$F_1$};

\node[xshift=4.8cm] at (120:2.4cm) {$F_{n-1}$};

\node[xshift=4.8cm] at (150:2.6cm) {$F_{n-1}$};

\node[xshift=4.8cm] at (270:3cm) {$(b)$};
\end{tikzpicture}
\captionof{figure}{ }
\end{center}
On the other hand, $F_{n-1}$ is adjacent to $F_{n}=F_{1}$ and, by Lemma \ref{l-3},
the face shadow of $Z$ is a cyclic sequence of type
$$F_1,\dots,F_{n-1},F_1,\dots,F_{n-1},\dots,F_1,F_{n-1},\dots;$$
we get a contradiction.

(2). 
Now, we consider the case when $F_1, F_2,\dots,F_n$ are not consecutive faces in the face shadow of $Z$.
Let us take the greatest number $k$ such that $F_1,\dots, F_k$ are consecutive faces in the face shadow of $Z$.
Since this face shadow contains the sequence $F_{1},F_{2}$, we have $k\ge 2$.
In the case when $k=2$, Lemma \ref{l-3} shows that the face shadow of $Z$ is a cyclic sequence of type
$$F_{1},F_{2},\dots, F_{1},\dots,F_{3},F_{2},F_{1},\dots,F_{3},\dots,F_{2},F_{3},\dots$$
(Fig.10).  This means that $F_{1},F_{2},F_{3}$ are consecutive faces in the face shadow of the reversed zigzag $Z^{-1}$.
So, we can assume that $k\ge 3$.

\begin{center}
\begin{tikzpicture}[scale=0.8]
\draw (0,0) circle (2cm);
\draw[fill=black] (0:2cm) circle (2.8pt);
\draw[fill=black] (180:2cm) circle (2.8pt);

\draw[fill=black] (86:2cm) circle (2.8pt);
\draw[fill=black] (94:2cm) circle (2.8pt);
\draw[fill=black] (266:2cm) circle (2.8pt);
\draw[fill=black] (274:2cm) circle (2.8pt);

\draw[fill=black] (282:2cm) circle (2.8pt);
\draw[fill=black] (172:2cm) circle (2.8pt);
\draw[fill=black] (225:2cm) circle (2.8pt);

\node at (0:2.4cm) {$F_1$};
\node at (180:2.4cm) {$F_2$};

\node at (84:2.35cm) {$F_2$};
\node at (95:2.35cm) {$F_1$};
\node at (264:2.4cm) {$F_1$};
\node at (276:2.4cm) {$F_2$};

\node at (287:2.38cm) {$F_3$};
\node at (170:2.4cm) {$F_3$};
\node at (225:2.4cm) {$F_3$};

\end{tikzpicture}
\captionof{figure}{ }
\end{center}
We apply Lemma \ref{l-3} to the faces $F_{k-1}$ and $F_{k}$ and to the faces $F_{k}$ and $F_{k+1}$.
Taking in account the fact that $F_{k+1}$ does not occur in the face shadow of $Z$ immediately after $F_{1},\dots,F_{k}$,
we obtain that the face shadow of $Z$ is a cyclic sequence of type
$$F_1,F_2,\dots, F_{k-1},F_k,\dots, F_1,\dots, F_2,F_1,\dots,$$
$$F_{k+1},F_k,F_{k-1},\dots,F_{k+1},\dots,F_k,F_{k+1},\dots$$
(Fig.11).
\begin{center}
\begin{tikzpicture}[scale=1]
\draw (0,0) circle (2cm);
\draw[fill=black] (0:2cm) circle (2.6pt);
\draw[fill=black] (180:2cm) circle (2.6pt);

\draw[fill=black] (86:2cm) circle (2.6pt);
\draw[fill=black] (94:2cm) circle (2.6pt);
\draw[fill=black] (266:2cm) circle (2.6pt);
\draw[fill=black] (274:2cm) circle (2.6pt);

\draw[fill=black] (41:2cm) circle (2.6pt);
\draw[fill=black] (49:2cm) circle (2.6pt);

\draw[fill=black] (150:2cm) circle (2.6pt);

\draw[fill=black] (188:2cm) circle (2.6pt);
\draw[fill=black] (196:2cm) circle (2.6pt);

\draw[fill=black] (124:2cm) circle (2.6pt);
\draw[fill=black] (116:2cm) circle (2.6pt);

\node at (0:2.35cm) {$F_1$};
\node at (180:2.5cm) {$F_{k-1}$};

\node at (85.5:2.3cm) {$F_2$};
\node at (94.5:2.3cm) {$F_1$};
\node at (265.5:2.35cm) {$F_1$};
\node at (274.5:2.35cm) {$F_2$};

\node at (38:2.35cm) {$F_k$};
\node at (47:2.4cm) {$F_{k-1}$};

\node at (147:2.4cm) {$F_{k+1}$};

\node at (189:2.33cm) {$F_k$};
\node at (196:2.45cm) {$F_{k+1}$};

\node at (126.5:2.35cm) {$F_k$};
\node at (113.5:2.3cm) {$F_{k+1}$};

\end{tikzpicture}
\captionof{figure}{ }
\end{center}
Finally, for every $i$ satisfying $k<i\le n$ we establish that $F_{i}$ occurs in the face shadow of $Z$ as follows
$$F_{1},F_{2},\dots,F_{1},\dots, F_{2},F_{1},\dots,F_{i},\dots,F_{i},\dots,F_{i},\dots;$$
if we take $F_i=F_n=F_1$,  then the latter shows that $F_{1}$ is contained in the face shadow of $Z$ 
more than $3$ times which is impossible.

In each of the considered above cases, we get a contradictions which means that ${\mathrm G}_2$ does not contain cycles. 

\section{Two examples}
Let $BP_n$ be the $n$-gonal bipyramid 
containing an $n$-gone whose vertices  are denoted by $1,\dots,n$ and connected with two disjoint vertices $a, b$.
We also consider the $n$-gonal  bipyramid $BP'_n$, where the vertices of the $n$-gone are denoted by $1',\dots,n'$ and 
$a',b'$ are the remaining two vertices.

\begin{exmp}{\rm
The $3$-gonal bipyramids $PB_{3}$ and $PB'_{3}$ are $z$-knotted triangulations whose zigzags are the cyclic sequences
$$\underbrace{12, 2b, b3, 31, 1a}_{A},\underbrace{a2, 23, 3b, b1, 12}_{B}, \underbrace{2a, a3, 31, 1b, b2, 23, 3a, a1}_{C}$$
and
$$\underbrace{1'2', 2'b', b'3', 3'1', 1'a'}_{A'},\underbrace{a'2', 2'3', 3'b', b'1', 1'2'}_{B'}, \underbrace{2'a', a'3', 3'1', 1'b', b'2', 2'3', 3'a', a'1'}_{C'}.$$
Let $S$ and $S'$ be faces of $BP_3$ and $BP'_3$ containing $a,1,2$ and $a',1',2'$  (respectively).
Consider the connected sum $BP_{3} \#_{g} BP'_{3}$, where $g:\partial S\to \partial S'$ satisfies 
$$g(a)=a',\;\;g(1)=1',\;\;g(2)=2'$$
(Fig.12).
This connected sum is $z$-knotted and the unique zigzag (up to reverse) is the cyclic sequence
$$Z=\{A, C'^{-1}, B, A', C^{-1}, B'\},$$
where $C^{-1}$ and $C'^{-1}$ are the sequences reversed to $C$ and $C'$ (respectively);
note that for any two consecutive parts $X,Y$ in $Z$ the last edge from $X$ is identified with the first edge from $Y$.
Denote by $F$ and $F'$ the faces of $BP_{3} \#_{g} BP'_{3}$ containing $b,1,2$ and $b',1,2$.
By \cite[Example 6]{PT2}, the $z$-monodromies of these faces are of type (M2).
Each of the remaining $8$ faces contains one of the edges $23,31,23',3'1$.
The edge $23$ is contained in $B$ and $C$, i.e. the zigzag $Z$ passes through this edge twice in different directions. 
The same holds for the three other edges. 
By the remark at the end of Section 3, for each face containing one of these four edges the $z$-monodromy is of type (M3) or (M4).
Therefore, the graph ${\mathrm G}_2$ corresponding to $BP_{3} \#_{g} BP'_{3}$ is a linear graph $P_2$.
The same is true for the similarly defined connected sum of $(2k+1)$-gonal and $(2k'+1)$-gonal bipyramids 
for arbitrary odd $k$ and $k'$ (using \cite[Example 6]{PT2} the readers can check all the details).
}\end{exmp}

\begin{center}
\begin{tikzpicture}[scale=0.8]

\draw[fill=black] (0,0) circle (2pt);
\draw[fill=black] (2,0) circle (2pt);
\draw[fill=black] (1,1) circle (2pt);
\draw[fill=black] (3,1) circle (2pt);
\draw[fill=black] (1.5,-1.5) circle (2pt);
\draw[fill=black] (0.75,2.5) circle (2pt);
\draw[fill=black] (2.25,2.5) circle (2pt);

\draw[thick] (0,0) -- (2,0)-- (3,1)-- (1,1)-- (0,0)-- (0.75,2.5);
\draw[thick] (2,0) -- (1,1) -- (0.75,2.5) -- (2,0) -- (2.25,2.5)-- (3,1);
\draw[thick] (2.25,2.5)-- (1,1);
\draw[thick] (0,0)-- (1.5,-1.5) -- (2,0);
\draw[thick] (1,1)-- (1.5,-1.5) -- (3,1);

\end{tikzpicture}
\captionof{figure}{ }
\end{center}

\begin{exmp}{\rm
Each of the $6$-gonal bipyramids $BP_6$ and $BP'_6$ contains exactly two zigzags (up to reverse).
The cyclic sequences
$$\underbrace{12, 2b, b3, 34, 4a, a5, 56, 6b, b1, 12}_{A},\underbrace{2a, a3, 34, 4b, b5, 56, 6a, a1}_{B}$$
and
$$\underbrace{a2, 23, 3b, b4, 45, 5a, a6, 61, 1b, b2, 23, 3a, a4, 45, 5b, b6, 61, 1a}_{C}$$
are zigzags in $BP_6$. Similarly, 
$$\underbrace{1'2', 2'b', b'3', 3'4', 4'a', a'5', 5'6', 6'b', b'1', 1'2'}_{A'},\underbrace{2'a', a'3', 3'4', 4'b', b'5', 5'6', 6'a', a'1'}_{B'}$$
and
$$\underbrace{a'2', 2'3', 3'b', b'4', 4'5', 5'a', a'6', 6'1', 1'b', b'2', 2'3', 3'a', a'4', 4'5', 5'b', b'6', 6'1', 1'a'}_{C'}$$
are zigzags in $BP'_6$.
Let $S$ and $S'$ be faces of $BP_6$ and $BP'_6$  containing vertices $a,1,2$ and $a',1',2'$ (respectively).
Let also $g:\partial S\to \partial S'$ be the special homeomorphism satisfying 
$$g(a)=2',\; g(1)=a',\; g(2)=1'.$$
By \cite[Example 7]{PT2}, the connected sum $BP_{6} \#_{g} BP'_{6}$ is $z$-knotted and the unique zigzag (up to reverse)
is the cyclic sequence
$$Z=\{A, C'^{-1}, C^{-1}, A', B, B'\}$$
(as in the previous example, for any two consecutive parts $X,Y$ in $Z$ the last edge from $X$ is identified with the first edge from $Y$).
We will need the following observations concerning the edges of $BP_{6}$:
\begin{enumerate}
\item[$\bullet$] 
Each of the edges $12, 34, 56$ is contained twice in the sequence $A, B$ and each of the edges $23, 45, 61$ is contained twice in the sequence $C$.
\item[$\bullet$] 
An edge $e$ containing $a$ or $b$ belongs to the sequence $A,B$ if and only if $-e$ is contained in $C$.
\end{enumerate}
The same holds for the edges of $BP'_6$. 
Therefore, the zigzag $Z$ passes through each edge of $BP_{6} \#_{g} BP'_{6}$ twice in the same direction. 
Then for each face of the connected sum $BP_{6} \#_{g} BP'_{6}$ the $z$-monodromy is of type (M1) or (M2),
see the remark at the end of Section 3.
A direct verification shows that the $z$-monodromies of the faces
$$a23, a34, a61, a13', b45, b56,  15'6', 126', b'3'4', b'4'5'$$
and the faces
$$a45, a56, 13'4', 14'5', 
b61, b12, b23, b34,  
ab'3', ab'2, b'26', b'5'6'$$
are of types (M1) and (M2), respectively.
So, the graph ${\rm G}_1$ is a linear forest formed by five $P_{2}$;
the graph ${\rm G}_2$ is a linear forest consisting of two $P_2$ and two $P_4$.
}\end{exmp}

\end{document}